\newcommand{\ter}[1]{\textcolor{red}{#1}}
\newcommand{\reff}[1]{(\ref{#1})}
\newcommand{\mS}{\mathbb{S}}
\newcommand{\mR}{\mathbb{R}}
\newcommand{\mC}{\mathbb{C}}
\newcommand{\bit}{\begin{itemize}}
\newcommand{\st}{\mathit{s.t.}}
\newcommand{\eit}{\end{itemize}}
\newcommand{\be}{\begin{equation}}
\newcommand{\ee}{\end{equation}}
\newcommand{\baray}{\begin{array}}
\newcommand{\earay}{\end{array}}
\theoremstyle{thmstyleone}%
\newtheorem{thm}{Theorem}[section]
\newtheorem{prop}[thm]{Proposition}
\newtheorem{corollary}[thm]{Corollary}
\newtheorem{lemma}[thm]{Lemma}
\numberwithin{equation}{section}
\begin{document}

\title[Optimality conditions for homogeneous polynomial optimization on
the unit sphere]{Optimality conditions for homogeneous polynomial optimization on
	the unit sphere}

\author[1,2]{\fnm{Lei} \sur{Huang}}\email{huanglei@lsec.cc.ac.cn}

\affil[1]{\orgdiv{Institute of Computational Mathematics and Scientific/Engineering Computing}, \orgname{ Academy of Mathematics and Systems Science,
		Chinese Academy of Sciences}, \orgaddress{ \city{Beijing},  \country{ China}}}

\affil[2]{\orgdiv{ School of Mathematical Sciences}, \orgname{University of Chinese Academy of Sciences}, \orgaddress{ \city{Beijing},  \country{ China}}}

\abstract{In this note, we prove that for  homogeneous polynomial optimization on the sphere, if the objective $f$ is generic in the input space,    all feasible points satisfying the  first order and second order necessary optimality conditions  are  local minimizers, which addresses an issue
	raised in the recent work by Lasserre (Optimization Letters, 2021). As a corollary, this implies that Lasserre’s hierarchy has finite convergence when $f$ is generic.}

\keywords{Homogeneous polynomials $\cdot$ Optimization on
	the unit sphere $\cdot$
	Optimality conditions}


\maketitle

\section{Introduction}

Consider the optimization problem
\be   \label{1.1}
\left\{\baray{rl}
\min & f(x) \\
\st  & x \in \mathbb{S}^{n-1},
\earay \right.
\ee
where $f(x)$ is a homogeneous polynomial of degree $d$ and $\mathbb{S}^{n-1}$ denotes $n$-dimensional unit sphere, i.e.,
$$
\mathbb{S}^{n-1}:=\{x\in \mR^n: x_1^2+\cdots+x_n^2=1\}.
$$
 This problem  has
broad applications in quantum entanglement, tensor decompositions and so on, referring to \cite{dke1,dke2, FF, Lashom}   for details.

As a special case of  general polynomial optimization problems,  the classical Lasserre type Moment-SOS hiearchy of semidefinite relaxations  \cite{Las01} 
is efficient for solving \reff{1.1} globally, i.e., in general the optimal value and global minimizers can be computed efficiently.  Asymptotic convergence  is always guaranteed since the quadratic module generated by  the constraining polynomial is archimedean. Convergence rate of Lasserre's hierarchy has been studied in \cite{FF,rez}.   For general polynomial optimization problems, it was shown in \cite{nieopcd} that the Lasserre’s hierarchy  converges in finite steps generically under the archimedeanness. To be more specific, Nie \cite{nieopcd} proved that the Lasserre’s hierarchy has
finite convergence if the linear independence constraint qualification, strict complementarity
and second order sufficient conditions hold at every global minimizer and these optimality conditions hold at every local minimizer generically (we say a property holds generically if it holds except a zero measure set in the input space).   Recently, Lasserre \cite{Lashom} has  characterized all points that satisfy first  and
second order necessary optimality conditions, only in terms of $f$, its gradient and
the two smallest eigenvalues of its Hessian, and he also conjectured that generically  all feasible points of \reff{1.1} satisfying first  and  second order necessary optimality conditions are 
local minimizers,   for fixed degree d. 

In this paper, we show that for fixed degree d,   all feasible points  of \reff{1.1} satisfying the first  and second order necessary  optimality conditions also satisfy the second order sufficient condition except on  a zero measure set  in the input space. This result gives a positive answer to the issue raised by Lasserre,  since every feasible point of \reff{1.1} satisfying the first  and second order sufficient optimality conditions is a local minimizer.  As a direct corollary,  Lasserre’s hierarchy has   finite convergence for optimizing  homogeneous polynomials  on
the unit sphere generically. We would like to remark that the  result of Nie \cite{nieopcd} can not be applied  directly.   For  fixed degree $d$, suppose $f(x)$ is a  polynomial of degree $\leq d$ and $g(x)$ is a  polynomial of degree $\leq 2$. Nie's result implies that for problems of 
the form
\be  \label{ooo}
\min f(x)\quad \text{s.t.}~ g(x)=0,
\ee
it is  true that the second order sufficient  condition holds at every feasible point of \reff{ooo} satisfying the first  and second order necessary  optimality conditions when $f$ is generic in the space of polynomials with degree $\leq d$  and  $g$ is generic in the space of polynomials with degree $\leq 2$. When it specializes  to the case where $f$ is required to be homogeneous of degree $d$ and  $g$ is the fixed polynomial $\|x\|^2-1$, Nie's result can not be applied. This is because the set of homogeneous polynomials of degree $d$ is already a zero measure set  in the space of polynomials with degree $\leq d$ and the same for $\|x\|^2-1$. The similar observation  was
also  found by Lasserre in \cite{Lashom}.
 Throughout the paper, we assume $d \geq 1$ because the case $d=0$ is  trivial.

In Section \ref{sec2}, we address required preliminaries and   the main results are presented in Section \ref{sec3}.

\section{Preliminaries}\label{sec2}

We review some basic results on optimality conditions for  homogeneous polynomial optimization on the sphere.  For every $x \in \mathbb{S}^{n-1}$, let 
$$
x^{\perp}:=\left\{u \in \mathbb{S}^{n-1}: u^{T} x=\right.0\}.
$$

\begin{prop} [\cite{Lashom}, Proposition 2.1]
	\label{lasopt}
	If $x^{*} \in \mathbb{S}^{n-1}$ is a local minimizer of \reff{1.1}, then there exists $\lambda^{*} \in \mathbb{R}$ such that:
	\bit	
	\item [(i)]
	The first order necessary condition (FONC) holds:
	\be \label{FOOP}
	\nabla f\left(x^{*}\right)- \lambda^{*} x^{*}=0.
	\ee
	\item [(ii) ]
	The second order necessary condition (SONC) holds:
	\be
	u^{T} \nabla^{2} f\left(x^{*}\right) u- \lambda^{*} \geq 0, \quad \forall u \in\left(x^{*}\right)^{\perp}.
	\ee
	Conversely, if $x^{*} \in \mathbb{S}^{n-1}$ satisfies \reff{FOOP} and the	 second order sufficient  condition (SOSC)
	\be
	u^{T} \nabla^{2} f\left(x^{*}\right) u- \lambda^{*}>0, \quad \forall u \in\left(x^{*}\right)^{\perp},
	\ee
	then $x^*$ is a local minimizer of \reff{1.1}.
	\eit
\end{prop}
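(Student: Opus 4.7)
The plan is to view problem \reff{1.1} as a smooth equality-constrained optimization problem with constraint $g(x):=x^{T}x-1=0$ and invoke the classical first and second order Lagrange conditions. The basic observation is that $\nabla g(x)=2x$ is nonzero on $\mS^{n-1}$, so the linear independence constraint qualification holds at every feasible point and a unique Lagrange multiplier is produced.

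To obtain (i) and (ii), I would form the Lagrangian $L(x,\mu):=f(x)-\mu(x^{T}x-1)$. The classical FONC for equality constraints gives $\nabla f(x^{*})-2\mu^{*}x^{*}=0$; setting $\lambda^{*}:=2\mu^{*}$ recovers \reff{FOOP}. The classical SONC states that $u^{T}\nabla^{2}_{x}L(x^{*},\mu^{*})u\geq 0$ for every $u$ in the tangent space $T_{x^{*}}\mS^{n-1}=\{u\in\mR^{n}:u^{T}x^{*}=0\}$. Since $\nabla^{2}_{x}L=\nabla^{2}f(x^{*})-2\mu^{*}I=\nabla^{2}f(x^{*})-\lambda^{*}I$, this reads $u^{T}\nabla^{2}f(x^{*})u-\lambda^{*}\|u\|^{2}\geq 0$ for all $u\perp x^{*}$. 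Because the left hand side is homogeneous of degree two in $u$, the inequality is equivalent to its restriction to unit tangent vectors, i.e.\ to $u\in(x^{*})^{\perp}$ in the paper's notation (where $\|u\|=1$), which is exactly the stated SONC.

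For the converse I would quote the classical SOSC: if $(x^{*},\mu^{*})$ satisfies FONC and $u^{T}\nabla^{2}_{x}L(x^{*},\mu^{*})u>0$ for every nonzero $u$ in the tangent space, then $x^{*}$ is a strict local minimizer. The hypothesis gives strict positivity of $u^{T}\nabla^{2}f(x^{*})u-\lambda^{*}$ on $(x^{*})^{\perp}$, which by the same degree-two homogeneity extends to all nonzero $u\perp x^{*}$, so the classical SOSC applies. There is no substantive analytic obstacle here beyond the standard equality-constrained optimality theory specialized to a codimension-one smooth submanifold; the only point demanding a moment of care is the bookkeeping of the factor of two between $\mu^{*}$ and $\lambda^{*}$ and the passage between ``unit tangent vector'' (the paper's $(x^{*})^{\perp}$) and ``nonzero tangent vector'' (the classical formulation), which is immediate from homogeneity in $u$.
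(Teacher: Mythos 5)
Your proposal is correct: the proposition is exactly the classical first- and second-order optimality theory for the equality-constrained problem $\min f(x)$ s.t. $x^Tx-1=0$, where LICQ holds everywhere on the sphere, the multiplier bookkeeping gives $\lambda^*=2\mu^*$, and degree-two homogeneity in $u$ passes between unit and nonzero tangent vectors. The paper itself gives no proof of this statement (it is quoted from Lasserre's Proposition 2.1), and your argument is the standard derivation underlying that cited result.
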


A point $x^* \in \mS^{n-1}$ is called an SONC (resp., SOSC) point of \reff{1.1} if $x^*$ satisfies the FONC and SONC (resp., SOSC).
We  need the elimination theorem for general homogeneous polynomial systems to prove our main result.

\begin{thm} [\cite{hartshorne2013algebraic}, Theorem 5.7A, Chapter 1]
	\label{eli}
	Let $f_{1}, \ldots, f_{r}$ be homogeneous polynomials in $x_{0}, \ldots, x_{n}$, having indeterminate coefficients $a_{i j} .$ Then there is a set $g_{1}, \ldots, g_{t}$ of polynomials in the $a_{i j}$, with integer coefficients, which are homogeneous in the coefficients of each $f_{i}$ separately, with the following property: for any field $k$, and for any set of special values of the $a_{i j} \in k$, a necessary and sufficient condition for the $f_{i}$ to have a common zero different from $(0, \ldots, 0)$ is that the $a_{i j}$ are a common zero of the polynomials $g_{j} .$
	
\end{thm}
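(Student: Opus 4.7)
The statement is the classical ``main theorem of elimination theory.'' My plan is to combine the projective Nullstellensatz with a uniform degree bound, so that the polynomials $g_j$ can be produced explicitly as maximal minors of a multiplication matrix. Write $d_i = \deg f_i$, and for each integer $N \geq \max_i d_i$ consider the $k$-linear map
\[
\Phi_N:\ \bigoplus_{i=1}^{r} k[x_0, \ldots, x_n]_{N-d_i} \ \longrightarrow\ k[x_0, \ldots, x_n]_N,\quad (h_1, \ldots, h_r)\ \mapsto\ \sum_{i=1}^r h_i f_i.
\]
By the projective Nullstellensatz over the algebraic closure $\bar{k}$, the $f_i$ have no nontrivial common zero if and only if $(x_0, \ldots, x_n)^N \subseteq (f_1, \ldots, f_r)$ for some $N$, equivalently $\Phi_N$ is surjective for some $N$.

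The crucial ingredient is a \emph{uniform} degree bound: a single $N$, depending only on $n$ and on the degrees $d_i$, such that surjectivity of $\Phi_N$ is equivalent to the $f_i$ having no nontrivial common zero for every specialization of the coefficients $a_{ij}$. Such a bound comes from classical multivariate resultant theory (Macaulay), or more conceptually from the completeness of projective space, namely the projection $\mathbb{P}^n_k \times \mathrm{Spec}\, k[a_{ij}] \to \mathrm{Spec}\, k[a_{ij}]$ being a closed map.

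With such an $N$ fixed, I choose monomial bases of the source and target of $\Phi_N$. The entries of the associated matrix are either zero or a coefficient $a_{ij}$ of some $f_i$, and each column arises from multiplying a single monomial by a single $f_i$. The map $\Phi_N$ fails to be surjective precisely when every maximal minor of this matrix vanishes, so I take $g_1, \ldots, g_t$ to be these maximal minors: they have integer coefficients (being determinants of matrices whose entries are $0$ or some $a_{ij}$), and because each column involves exactly one $f_i$, they are automatically homogeneous in the coefficients of each $f_i$ separately.

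The main obstacle is the uniform degree bound. The Nullstellensatz step and the linear-algebraic extraction of the $g_j$'s are essentially formal; what carries the real content is the fact that a single degree $N$ suffices across all specializations of the coefficients $a_{ij}$. I would address this either via Macaulay's explicit inductive construction of the multivariate resultant, or, more structurally, by appealing to the properness of projective space, which yields the closedness of the projection onto the coefficient space.
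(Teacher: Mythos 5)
The paper offers no proof of this statement at all---it is imported verbatim from Hartshorne (Theorem 5.7A, Chapter 1), who in turn states it without proof as classical elimination theory---so your sketch has to stand on its own. The parts you actually carry out are sound: in monomial bases the matrix of $\Phi_N$ has entries that are $0$ or single coefficients $a_{ij}$, each column involves the coefficients of exactly one $f_i$, and hence the minors have integer coefficients and are separately homogeneous in the coefficients of each $f_i$, exactly as the theorem demands. But there is a genuine gap, which you flag and then defer: the uniform degree bound \emph{is} the content of the theorem. Without a single $N$ valid for every specialization of the $a_{ij}$, the projective Nullstellensatz only yields ``there exists $N$ with $\Phi_N$ surjective,'' which is an existential statement, not a polynomial condition on the $a_{ij}$. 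Naming Macaulay's construction or the properness of $\mathbb{P}^n$ is not the same as establishing either; and the properness route as you describe it produces a closed subset of the coefficient space over each field $k$ separately, so to obtain defining equations with integer coefficients, independent of $k$ and multi-homogeneous, you would still need to run the argument over $\mathbb{Z}$ and use the invariance of the incidence variety under separate rescaling of the coefficients of each $f_i$---routine, but not automatic from ``the projection is a closed map.''

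Two further points need repair. First, your criterion ``$\Phi_N$ fails to be surjective precisely when every maximal minor vanishes'' is correct only when the matrix has at least as many columns as rows; you should say you take the $m \times m$ minors, where $m$ is the dimension of the target, and treat separately the case where the source is too small. That case does occur: for $r \le n$ the forms always have a common zero over $\bar{k}$ and $\Phi_N$ is never surjective, so there one takes the single polynomial $g_1 = 0$. Second, your Nullstellensatz reduction tacitly reads ``common zero'' as a zero over $\bar{k}$; for non-algebraically-closed $k$ the literal statement with zeros in $k$ is false (e.g.\ $x_0^2 + x_1^2$ over $\mathbb{R}$ has no nontrivial real zero yet every resultant-type condition vanishes), so this reading must be made explicit. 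It is the reading the paper actually uses, since it applies the theorem to detect solutions in $\mathbb{C}^{2n}$.
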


\section{Main result} \label{sec3}
In this section, we  prove that for a fixed degree $d$,  every SONC point of \reff{1.1} satisfies the SOSC  except on a zero measure set in the input space. The following is a useful lemma.
\begin{lemma}
	\label{lem:opt}
	Suppose $x^* \in \mS^{n-1}$. If  $x^*$ is  an SONC point of \reff{1.1} and the SOSC fails at $x^*$, then   there  exists a nonzero  $y^*\in \mR^n$ such that
	\be \label{rank}
	\operatorname{rank}\left[\begin{array}{lllll}
		\nabla f(x^*) & x^* & 0\\
		\nabla^{2}f(x^*)y^* &y^* & x^*
	\end{array}\right] \leq 2,~~ (y^*)^Tx^*=0.
	\ee
	Conversely, if \reff{rank} holds for a nonzero  $y^*\in \mR^n$, then the FONC holds at $x^*$ while the SOSC fails.
\end{lemma}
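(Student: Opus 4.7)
The plan is to handle the two implications separately, noting first that the rank-at-most-$2$ condition can be read as: the first column of the displayed $2n\times 3$ matrix lies in the span of the second and third. Writing out that linear dependence gives exactly a pair of vector equations
\[
\nabla f(x^*)=\lambda x^*,\qquad \nabla^{2} f(x^*)\,y^*=\lambda y^*+\alpha x^*,
\]
for some scalars $\lambda,\alpha\in\mR$, together with $y^*\neq 0$ and $(y^*)^T x^*=0$. Thus the entire lemma reduces to showing that ``SONC plus SOSC failure'' at $x^*$ is equivalent to the existence of a nonzero $y^*\perp x^*$ satisfying this pair of equations.

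For the forward direction, I would let $\lambda^*$ be the multiplier furnished by Proposition~\ref{lasopt}, so that $\nabla f(x^*)=\lambda^*x^*$. By homogeneity in $u$, the SONC extends from the unit sphere to the whole linear subspace $V:=\{u\in\mR^n:u^Tx^*=0\}$: the quadratic form $\phi(u):=u^T(\nabla^{2}f(x^*)-\lambda^*I)u$ is nonnegative on $V$. SOSC failure then supplies a nonzero $y^*\in V$ with $\phi(y^*)=0$. Hence $y^*$ is a global minimizer of $\phi$ on $V$, and its gradient $2(\nabla^{2}f(x^*)-\lambda^*I)y^*$ must lie in the normal space $\operatorname{span}(x^*)$ of $V$. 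This gives $\nabla^{2}f(x^*)y^*-\lambda^*y^*=\alpha x^*$ for some $\alpha\in\mR$, which combined with the FONC exhibits column one as $\lambda^*$ times column two plus $\alpha$ times column three, yielding \reff{rank}.

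For the converse, I would first verify that the second and third columns are linearly independent: any relation $\beta\,\mathrm{col}_2+\gamma\,\mathrm{col}_3=0$ forces $\beta x^*=0$ in the top block, hence $\beta=0$ since $x^*\neq 0$, and then $\gamma x^*=0$, hence $\gamma=0$. So the rank-$\leq 2$ hypothesis puts the first column into the span of the second and third, producing scalars $\lambda,\alpha$ with $\nabla f(x^*)=\lambda x^*$ (the FONC, with multiplier $\lambda$) and $\nabla^{2}f(x^*)y^*=\lambda y^*+\alpha x^*$. Dotting the second equation with $y^*$ and using $(y^*)^T x^*=0$ yields $(y^*)^T\nabla^{2}f(x^*)y^*=\lambda\|y^*\|^2$; normalizing to $u:=y^*/\|y^*\|\in(x^*)^{\perp}$ gives $u^T\nabla^{2}f(x^*)u-\lambda=0$, so the strict SOSC inequality fails at this $u$.

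I do not anticipate a serious obstacle. The only technical point worth being careful about is the homogenization step in the forward direction, which lets the SONC (stated on unit vectors in Proposition~\ref{lasopt}) be recast as nonnegativity of $\phi$ on the full linear subspace $\{x^*\}^{\perp}$, so that the standard first-order condition for a quadratic to be minimized on a linear subspace can be applied cleanly at $y^*$.
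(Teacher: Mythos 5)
Your proposal is correct and follows essentially the same route as the paper's proof: both directions reduce the rank condition to the pair of equations $\nabla f(x^*)=\lambda x^*$, $\nabla^{2}f(x^*)y^*=\lambda y^*+\alpha x^*$, with the forward implication obtained from the first-order optimality condition for the degenerate direction $y^*$ (the paper minimizes $z^T\nabla^2 f(x^*)z$ over unit vectors in $(x^*)^{\perp}$, you minimize the homogenized form $z^T(\nabla^2 f(x^*)-\lambda^* I)z$ over the linear subspace, which is the same computation) and the converse by observing that the second and third columns are independent and then contracting the second equation with $y^*$. No gaps.
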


\begin{proof}
	Since $x^*$ is an SONC point,  we have $\nabla f\left(x^{*}\right)= \lambda^{*} x^{*}$ for some  $\lambda^* \in \mR$, by Proposition \ref{lasopt}. If the SOSC fails at $x^*$, then there exists $  0 \neq \ter{y^*} \in \mS^{n-1}$ satisfying 
	$$
	(y^*)^T\nabla^{2}f(x^*)y^*-\lambda^*=0,~~(y^*)^Tx^*=0.
	$$
	It implies that
	$y^*$ is a minimizer of the problem $\min_{z\in \left(x^{*}\right)^{\perp}} ~z^T\nabla^{2}f(x^*)z$. By the first order optimality condition, we have $\nabla^{2}f(x^*)y^*=\lambda^* y^*+ \beta x^*$, for some $\beta\in \mR$. Thus $(x^*,v)$ satisfies \reff{rank}.

	For the converse, suppose \reff{rank} holds for a nonzero  $y^*\in \mR^n$. Then there exists a nonzero $\beta:=(\beta_1,\beta_2,\beta_3)$ such that 
	
	$$
	\beta_1	\nabla f(x^*)+\beta_2x^*=0, ~\beta_1\nabla^{2}f(x^*)y^*+\beta_2y^* +\beta_3 x^*=0.
	$$
	If $\beta_1=0$, then $\beta_2=\beta_3=0$ since $x^* \in \mS^{n-1}$. Thus $\beta_1 \neq 0$,   and we have 
	$$
	\nabla f(x^*)+\frac{\beta_2}{\beta_1} x^*=0,~(y^*)^T\nabla^{2}f(x^*)y^*+\frac{\beta_2}{\beta_1}\|y^*\|^2=0.
	$$
 It implies that the FONC holds at $x^*$, while the SOSC fails. 
\end{proof}

Hence,    if the SOSC fails at  an SONC point of the problem (1.1), the following system 
\be \label{rank:hom}
\operatorname{rank}\left[\begin{array}{lllll}
	\nabla f(x) & x & 0\\
	\nabla^{2}f(x)y &y & x
\end{array}\right] \leq 2,~~ y^Tx=0.
\ee
has a solution $(x,y)\in \mathbb{C}^{2n}$ with $x\neq 0$, $y\neq 0$. This is because that if  $x^*$ is such an SONC point of (1.1) (i.e., SOSC fails at $x^*$), it follows from  Lemma \ref{lem:opt} that  there exists a   nonzero vector $y^*$ such that   \reff{rank} holds. Clearly, $(x^*,y^*)$ is a solution of \reff{rank:hom} with $x^*\neq0$, $y^*\neq 0$.

Next we investigate when the system \reff{rank:hom} has a pair of solution $(x,y) \in \mathbb{C}^{n} \times  \mathbb{C}^{n}$ with $x\neq 0$, $y\neq 0$. 
When $n=1$, the rank condition in \reff{rank:hom} always holds and can be dropped. When $n>1$, we can replace the rank  condition by the vanishing of all maximal minors. Thus, \reff{rank:hom}  is equivalent to
$$
Q_{1}(x, y)=\cdots=Q_{t}(x, y)=y^Tx=0
$$
for some polynomials $Q_{1},\dots,Q_{t}$, which are homogeneous in both $x$ and $y$, and their coefficients are also homogeneous  in the coefficients of  $f$. By applying Theorem \ref{eli}  in $x$ first, and then in $y$, there exist polynomials $\phi_{i}\left(f\right)(i=$ $1, \ldots, s)$ with integer coefficients, homogeneous in the coefficients of $f$, such that there exist $0 \neq x \in \mathbb{C}^{n}$, $0 \neq y \in \mathbb{C}^{n}$ satisfying \reff{rank:hom} if and only if
$$
\phi_{1}\left(f\right)=\cdots=\phi_{s}\left(f\right)=0.
$$
We would like to remark that the property of polynomials $Q_{1},\dots,Q_{t}$ being homogeneous in both $x$
	and $y$ is important. This is because  it allows us to apply  the elimination
	theorem  twice, separately in $x$, $y$.

Denote by $\mR[x]_{=d}$  the set of all homogeneous polynomials of degree $d$. Let 
$$
\phi(f)=\phi_{1}^2\left(f\right)+\cdots+\phi_{s}^2\left(f\right).
$$ 
Note that $\phi(f)$ is also a polynomial in the coefficients of $f$. Proposition \ref{prop:gen} is directly implied by the analysis above.
\begin{prop} 
	\label{prop:gen}
	Suppose the polynomial $f \in \mR[x]_{=d}$. Then for this fixed $f$, there exist $0 \neq x \in \mathbb{C}^{n}$, $0 \neq y \in \mathbb{C}^{n}$ satisfying \reff{rank:hom} if and only if $\phi(f)=0$.
	
\end{prop}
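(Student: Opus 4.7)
The plan is to unpack the preceding paragraph into a clean forward argument, taking Proposition \ref{prop:gen} as essentially a corollary of two applications of the elimination theorem (Theorem \ref{eli}), followed by the standard trick of collecting a finite family of polynomial equations into a single one via sum of squares over $\mathbb{R}$.

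First, I would rewrite the rank condition in \reff{rank:hom}. For $n>1$, the rank being at most $2$ is equivalent to the vanishing of all $3\times 3$ minors of the $2\times(2n+1)$ matrix, which yields finitely many polynomial equations $Q_1(x,y)=\cdots=Q_t(x,y)=0$ in the variables $(x,y)$. Together with the extra equation $y^T x=0$, we obtain a finite system $Q_1=\cdots=Q_t=y^Tx=0$. The crucial structural observation is that each $Q_i$ is \emph{bihomogeneous} in $x$ and in $y$ (since each row of the matrix is homogeneous in $(x,y)$ of fixed bidegree and the minors are multilinear in the rows), and that the coefficients of each $Q_i$ are themselves homogeneous polynomials in the coefficients of $f$. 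The equation $y^T x=0$ is bihomogeneous of bidegree $(1,1)$. For the edge case $n=1$ the rank condition is automatic and the analysis collapses to a triviality; I would handle it in one sentence.

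Next I would invoke Theorem \ref{eli} twice. View the system as homogeneous equations in the variables $y$, with the entries of $x$ and the coefficients of $f$ playing the role of ``indeterminate coefficients.'' The theorem produces a finite set of polynomials in $x$ and the coefficients of $f$ whose common vanishing is equivalent to the existence of a nonzero $y\in\mathbb{C}^n$ solving the system; these polynomials are homogeneous in $x$ (inherited from the bihomogeneity of the $Q_i$ and of $y^T x$) and homogeneous in the coefficients of $f$. I would then apply Theorem \ref{eli} a second time, now eliminating $x$, to obtain polynomials $\phi_1(f),\ldots,\phi_s(f)$, with integer coefficients and homogeneous in the coefficients of $f$, such that the existence of nonzero $x\in\mathbb{C}^n$ (and, given such $x$, nonzero $y\in\mathbb{C}^n$) satisfying \reff{rank:hom} is equivalent to $\phi_1(f)=\cdots=\phi_s(f)=0$. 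Here the bihomogeneity noted above is exactly what licenses the two-step elimination; this is the step the paper flags as delicate.

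Finally, setting $\phi(f)=\phi_1(f)^2+\cdots+\phi_s(f)^2$ and working over $\mathbb{R}$, we have $\phi(f)=0$ iff $\phi_i(f)=0$ for every $i$. Combining with the equivalence just established yields: there exist nonzero $x,y\in\mathbb{C}^n$ satisfying \reff{rank:hom} iff $\phi(f)=0$, which is the claim of Proposition \ref{prop:gen}.

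The main obstacle I would expect is a bookkeeping one rather than a conceptual one: verifying carefully that the rank minors together with $y^Tx$ really are bihomogeneous in $(x,y)$ so that Theorem \ref{eli} can be applied twice in succession. Once that is in place, the two eliminations and the sum-of-squares reduction proceed mechanically.
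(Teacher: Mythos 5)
Your argument is correct and follows essentially the same route as the paper: replace the rank condition by the vanishing of all $3\times 3$ minors (one small slip: the matrix in \reff{rank:hom} is $2n\times 3$, not $2\times(2n+1)$), note that these minors and $y^Tx$ are bihomogeneous in $x$ and $y$ with coefficients homogeneous in the coefficients of $f$, apply Theorem \ref{eli} twice (the paper eliminates $x$ first and then $y$; the order is immaterial), and collapse $\phi_1,\dots,\phi_s$ into the single real polynomial $\phi=\sum_i\phi_i^2$. The paper gives no separate proof, stating only that the proposition is directly implied by exactly this analysis.
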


If $\phi(f)= 0$, then there exist $0 \neq x^* \in \mathbb{C}^{n}$, $0 \neq y^* \in \mathbb{C}^{n}$ satisfying \reff{rank:hom}.  It  implies that 
\be \nonumber
\nabla f(x^*)-\lambda^* x^*=0, ~ \nabla^{2}f(x^*)y^* -\lambda^*y^* -\mu^* x^*=0,~(y^*)^Tx^*=0,
\ee
for  $\lambda^* \in \mC$, $\mu^* \in \mC$. Note that the vector $(y^*,\mu^*)$ is nonzero and we have $H(x^*,\lambda^*)(y^*,\mu^*)=0,$
 where 
\be
H(x,\lambda)=\left[\begin{array}{cc}
	\nabla^{2} f(x)-\lambda I_n & x  \\
	x^T &  0
\end{array}\right].
\ee
It implies that $\operatorname{det}(H(x^*,\lambda^*))=0$. Hence, if $\phi(f)= 0$,  there exist $0 \neq x^* \in \mathbb{C}^{n}$, $\lambda^* \in \mathbb{C}$ such that 
\be \label{local3.3}
\nabla f(x^*)-\lambda^* x^*=0, ~ \operatorname{det}(H(x^*,\lambda^*))=0.
\ee

For a complex number $z$, $\lvert z\rvert$ denotes its modulus. In the following,  we prove  that $\phi(f)$ does not identically vanish.

\begin{lemma} \label{zero}
	The polynomial $\phi(f)$  does not vanish identically in the coefficients of $f \in \mR[x]_{=d}$.
\end{lemma}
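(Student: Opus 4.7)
The plan is to exhibit a single polynomial $f_0 \in \mR[x]_{=d}$ for which $\phi(f_0) \neq 0$; this immediately shows that $\phi$ is not identically zero. By Proposition~\ref{prop:gen} together with the derivation culminating in \reff{local3.3}, it suffices to find $f_0$ such that no pair $(x^*, \lambda^*) \in (\mC^n \setminus \{0\}) \times \mC$ simultaneously satisfies $\nabla f_0(x^*) = \lambda^* x^*$ and $\det H(x^*, \lambda^*) = 0$.

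As a test polynomial I would take the diagonal form $f_0(x) = \sum_{i=1}^n c_i x_i^d$ for distinct positive reals $c_1, \ldots, c_n$ to be selected. The critical-point equation $dc_i (x_i^*)^{d-1} = \lambda^* x_i^*$ decouples coordinate-by-coordinate: letting $S := \{i : x_i^* \neq 0\}$, the assumption $x^* \neq 0$ forces $\lambda^* \neq 0$ and $(x_i^*)^{d-2} = \lambda^*/(dc_i)$ for $i \in S$, while $x_i^* = 0$ for $i \notin S$. A direct computation then shows that $\nabla^2 f_0(x^*) - \lambda^* I$ is the diagonal matrix with entries $(d-2)\lambda^*$ on $S$ and $-\lambda^*$ on $S^c$, which is nonsingular for $d \geq 3$. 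The Schur-complement formula applied to $H(x^*, \lambda^*)$ therefore collapses its determinant to a nonzero scalar multiple of $\sum_{i \in S}(x_i^*)^2$, reducing the entire question to whether this sum can vanish at some complex critical point. The low-degree cases $d = 1, 2$ are handled by direct inspection.

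The main obstacle, and the crux of the argument, is to choose the $c_i$ so that $\sum_{i \in S}(x_i^*)^2 \neq 0$ at every complex critical point. Writing $(x_i^*)^2 = \omega_i \bigl(\lambda^*/(dc_i)\bigr)^{2/(d-2)}$ with $\omega_i$ drawn from the finite set of squared $(d-2)$-th roots of unity, the non-vanishing reduces to a finite family of conditions of the form $\sum_{i \in S}\omega_i (dc_i)^{-2/(d-2)} \neq 0$, indexed by subsets $S$ and choices of $(\omega_i)_{i \in S}$. For $|S| = 1$ the condition is immediate, and for $|S| = 2$ it follows from distinctness of the positive $c_i$ (the ratio $-\omega_1/\omega_2$ has modulus one, but the ratio of two distinct positive reals does not). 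For $|S| \geq 3$ the subtlety is to verify that each such linear relation with roots-of-unity coefficients imposes a nontrivial polynomial condition on the $c_i$, which can be handled via a $\mathbb{Q}(\zeta_{d-2})$-linear-independence argument; granting this, only finitely many proper real-algebraic subvarieties of $\mR_{>0}^n$ must be avoided, and any choice of $c_i$ in their complement produces the required $f_0$ with $\phi(f_0) \neq 0$.
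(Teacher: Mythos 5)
Your overall strategy matches the paper's up to the key reduction: test a diagonal form $f_0=\sum_i c_i x_i^d$, note that \reff{local3.3} forces $\lambda^*\neq 0$, and for $d\neq 2$ use the Schur complement to collapse $\det H(x^*,\lambda^*)$ to a nonzero multiple of $\sum_{i\in S}(x_i^*)^2$. Where you diverge is the final step. The paper does not argue genericity over the $c_i$: it fixes the explicit geometric choice $c_i=\alpha^i$ with $\alpha=2^{d-2}$, so that $|x_i^*|^2=|\lambda^*/d|^{2/(d-2)}\,4^{-i}$ decays geometrically with ratio $1/4$, and then a single triangle-inequality estimate $\bigl|\sum_{i\in S}(x_i^*)^2\bigr|\geq |x_{i_0}^*|^2-\sum_{i\in S,\,i>i_0}|x_i^*|^2>0$ (with $i_0=\min S$) disposes of every support $S$ and every choice of phases simultaneously. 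This sidesteps the entire case analysis over subsets and roots of unity that you identify as the crux. (For $d=2$, where the Schur block degenerates, the paper separately shows that distinct coefficients force $S$ to be a singleton and computes $\det H$ directly.)

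As written, your argument has a gap exactly where you flag it: for $|S|\geq 3$ you only assert, "granting" a $\mathbb{Q}(\zeta_{d-2})$-linear-independence argument, that each relation $\sum_{i\in S}\omega_i(dc_i)^{-2/(d-2)}=0$ is a nontrivial condition on the $c_i$. Two remarks. First, for general $d$ this is not a polynomial condition on the $c_i$ (e.g.\ $d=5$ gives exponent $-2/3$), so "proper real-algebraic subvarieties of $\mR_{>0}^n$" is not the right framing; you should pass to the variables $t_i=(dc_i)^{-2/(d-2)}$, under which the bad sets become genuinely linear and the map $c\mapsto t$ is a diffeomorphism of $\mR_{>0}^n$. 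Second, once there, no Galois-theoretic input is needed: each $\omega_i$ is a root of unity, hence nonzero, so $\sum_{i\in S}\omega_i t_i$ is a nonzero complex-linear form whose real or imaginary part gives a nontrivial real-linear equation, i.e.\ a proper hyperplane; finitely many such hyperplanes can be avoided. So your route does close, but the step you leave open is both easier than you suggest and misdescribed, and the paper's explicit choice of $c_i$ is the cleaner way to produce a single concrete witness with $\phi(f_0)\neq 0$.
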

\begin{proof}
	We prove the result by considering the cases  $d=2$ and  $d \neq 2$. To show that $\phi(f)$  does not vanish identically, we only need to prove that $\phi(p)=0$ for a special $p$.
	
	\bit
	\item[(1)]  Suppose $d=2$. Let $p(x):=\frac{1}{2}(x_1^2+2x_2^2+\cdots+nx_n^2)$.  If $\phi(p)= 0$, the equation 
	\reff{local3.3} holds for some $ x^*\neq 0 $, $ \lambda^* \in \mC $. Thus, we have $kx^*_k=\lambda^* x^*_k$ $(k=1,\dots,n)$. Note that there exists $\ell \in \{1,\dots,n\}$ such that  
	$$
	x^*_{\ell} \neq 0,~~ x^*_1=\cdots=x^*_{\ell-1}=x^*_{\ell+1}=\cdots=x^*_n=0.
	$$
	Otherwise, if $x^*_i \neq 0$, $x^*_j \neq 0$ for $i \neq j$, we have $i=\lambda^*=j$, which is a contradiction. Hence $\lambda^*=\ell$, and the following holds	
	$$
	H(x^*,\lambda^*)=\left[\begin{array}{cccccc}
		1-\ell &  &  \\
		&\cdots &  \\
		&  &0 & & &x^*_{\ell}\\
		&  & &\cdots \\
		&  & & &n-\ell \\
		&  & x^*_{\ell}& & &
		0\\
	\end{array}\right].
	$$
	Clearly, we have $\operatorname{det}(H(x^*,\lambda^*))\neq0$, which contradicts the second  equation in  \reff{local3.3}. Hence, $\phi(p)\neq 0$. 
	
	\item[(2)]  Suppose $d\neq 2$.  Let $\alpha:=2^{d-2}$,  $p:=\alpha x_1^d+\alpha^2 x_2^d+\cdots+\alpha^n x_n^d$. If $\phi(p)= 0$, then there exist $ x^*\neq 0 $, $ \lambda^* \in \mC $ satisfying \reff{local3.3}. Note that 
	the multiplier $\lambda^* \neq 0$, otherwise $x^*$ would vanish. 
	Without loss of generality, assume that $x^*_1=\cdots=x^*_{\ell}=0, x^*_{\ell+1}\neq 0, \dots, x^*_n\neq 0$ for $\ell \in \{0,1,\dots,n-1\}$.  From the equation \reff{local3.3}, the following holds
	$$
	\lambda^*=d\alpha^{\ell+1}(x_{\ell+1}^*)^{d-2}=\cdots=d\alpha^n(x_{n}^*)^{d-2}.
	$$
	Denote $s^*=(x^*_{\ell+1},\dots,x^*_n)^T$, we have  
	$$
	H(x^*,\lambda^*)=\left[\begin{array}{c|c|c}
		-\lambda^* I_{\ell} & 0 & 0 \\
		\hline 0 & (d-2)\lambda^* I_{n-\ell} & s^* \\
		\hline 0 & (s^*)^T & 0
	\end{array}\right].
	$$
	Thus, it holds that 
	\begin{equation*}
	\begin{split}
		\lvert \operatorname{det}(H(x^*,\lambda^*)) \rvert^{\frac{1}{2}}
			&= \lvert d-2\rvert^{n-\ell-1}\lvert \lambda^*\rvert^{n-1}\lvert (x^*_{\ell+1})^2+\cdots+(x^*_{n})^2\rvert\\
			&\geq \lvert d-2\rvert^{n-\ell-1}\lvert \lambda^* \rvert^{n-1}(\lvert x^*_{\ell+1}\rvert^2-\lvert x^*_{\ell+2}\rvert^2-\cdots-\lvert x^*_{n}\rvert^2)\\
			&\geq \lvert d-2\rvert^{n-\ell-1}\lvert\lambda^*\rvert^{n-1}\lvert\frac{\lambda^*}{d}\rvert^{\frac{2}{d-2}}(\frac{1}{4^{\ell+1}}-\frac{1}{4^{\ell+2}}-\cdots-\frac{1}{4^{n}})\\
			&>0,
		\end{split}
	\end{equation*}
	which contradicts the second  equation in  \reff{local3.3}. Hence, $\phi(p)\neq 0$.
	\eit	 
\end{proof}

The following is our main result.
\begin{thm} \label{thm:gen}
	Suppose  the polynomial $f \in \mR[x]_{=d}$ satisfies 
	$\phi(f)\neq 0$,
	then every SONC point of \reff{1.1} satisfies the SOSC. Moreover, when $f$ is generic in $ \mR[x]_{=d}$,  every SONC point of \reff{1.1} is an SOSC point.
\end{thm}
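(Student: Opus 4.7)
The plan is to assemble the theorem from the ingredients already built up in the section. The first assertion is a contrapositive argument: assume $f$ is a fixed homogeneous polynomial of degree $d$ with $\phi(f)\neq 0$, and suppose for contradiction that some feasible $x^* \in \mS^{n-1}$ is an SONC point of \reff{1.1} at which the SOSC fails. Then the forward direction of Lemma \ref{lem:opt} produces a nonzero $y^* \in \mR^n$ with $(y^*)^T x^* = 0$ for which the rank condition \reff{rank} holds. Viewed over the complex numbers, the pair $(x^*, y^*)$ is a nonzero-in-both-factors solution of the system \reff{rank:hom} in $\mC^n \times \mC^n$. Proposition \ref{prop:gen} then forces $\phi(f) = 0$, contradicting the hypothesis. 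This settles the first sentence of the theorem.

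For the second sentence, I would note that $\phi(f) = \phi_1(f)^2 + \cdots + \phi_s(f)^2$ is by construction a polynomial in the coefficients of $f$, so the vanishing locus
\[
Z := \{\, f \in \mR[x]_{=d} : \phi(f) = 0 \,\}
\]
is a real algebraic subvariety of the finite-dimensional vector space $\mR[x]_{=d}$. By Lemma \ref{zero}, $\phi$ is not identically zero on $\mR[x]_{=d}$, so $Z$ is a proper algebraic subset and therefore has Lebesgue measure zero in $\mR[x]_{=d}$. For every $f$ in the complement of $Z$, the first part of the theorem applies and guarantees that each SONC point of \reff{1.1} satisfies the SOSC, i.e., is automatically an SOSC point. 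This is exactly the generic statement.

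The heart of the argument has been moved into the preceding material: Lemma \ref{lem:opt} links failure of SOSC at an SONC point to the rank-drop system \reff{rank:hom}, the double application of the elimination theorem (Theorem \ref{eli}) packages solvability of that system into the vanishing of the single polynomial $\phi(f)$ in Proposition \ref{prop:gen}, and Lemma \ref{zero} exhibits explicit test polynomials (the diagonal quadratic form for $d=2$ and the geometric-weight diagonal form $\sum_k \alpha^k x_k^d$ with $\alpha = 2^{d-2}$ for $d\neq 2$) on which $\phi$ does not vanish. Given these, the theorem itself reduces to combining them; I do not anticipate a real obstacle at this stage. If anything warrants care in the write-up, it is only making explicit that $\phi \not\equiv 0$ on the finite-dimensional space $\mR[x]_{=d}$ implies the zero set has measure zero, which is standard but worth stating so the corollary (finite convergence of Lasserre's hierarchy in the generic sense) follows transparently.
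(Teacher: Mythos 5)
Your proposal is correct and follows essentially the same route as the paper: a contradiction argument combining Lemma \ref{lem:opt} and Proposition \ref{prop:gen} for the first assertion, and the observation via Lemma \ref{zero} that $\{f : \phi(f)=0\}$ is a proper algebraic subset of $\mR[x]_{=d}$, hence of measure zero, for the generic statement. Your write-up is, if anything, slightly more explicit than the paper's about why a proper algebraic subvariety has Lebesgue measure zero.
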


\begin{proof}
	Suppose otherwise the SOSC fails at a SONC point  of \reff{1.1}.  By Lemma \ref{lem:opt}, the system \reff{rank:hom} is feasible for some $x^* \neq 0 $, $y^* \neq 0 $. It follows from Proposition  \ref{prop:gen} that $\phi\left(f\right)= 0$, which contradicts the assumption of Theorem \ref{thm:gen}.  Since the polynomial $\phi(f)$ does not vanish identically (cf. Lemma \ref{zero}), the set $\{f\in \mR[x]_{=d}\colon \phi(f)= 0\}$ is a zero measure subset of $\mR[x]_{=d}$. Thus,  every SONC point of \reff{1.1} is an SOSC point when  $f$ is generic in $\mR[x]_{=d}$.
\end{proof}

A direct corollary of Theorem \ref{thm:gen} is that the standard  Lasserre’s hierarchy converges in finite steps  generically.

\begin{corollary}
	Suppose $f$ is generic in $\mR[x]_{=d}$, then the Lasserre’s hierarchy of \reff{1.1} has finite convergence.
\end{corollary}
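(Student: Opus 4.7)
The plan is to derive the corollary as a short consequence of Theorem \ref{thm:gen} together with the generic finite convergence theorem of Nie \cite{nieopcd} recalled in the introduction. As noted there, Lasserre's hierarchy for \reff{1.1} converges asymptotically because the quadratic module generated by $\|x\|^2-1$ is archimedean, and finite convergence follows from Nie's theorem once the LICQ, strict complementarity, and SOSC are verified at every local minimizer.

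First I would verify that LICQ and strict complementarity are automatic for \reff{1.1}. Indeed, the only constraint is $g(x) := \|x\|^2 - 1 = 0$, whose gradient $\nabla g(x) = 2x$ is nonzero for every $x \in \mS^{n-1}$, so LICQ holds at every feasible point. Since \reff{1.1} carries no inequality constraints, strict complementarity is vacuous and hence trivially satisfied.

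Next I would verify the SOSC at every local minimizer. By Proposition \ref{lasopt}, every local minimizer of \reff{1.1} is in particular an SONC point. By Theorem \ref{thm:gen}, for every $f$ in the generic subset $\{f \in \mR[x]_{=d} : \phi(f) \neq 0\}$, every SONC point of \reff{1.1} is an SOSC point. Combining these, SOSC holds at every local minimizer of \reff{1.1} whenever $f$ lies in this generic set, which by Lemma \ref{zero} is the complement of a zero measure subset of $\mR[x]_{=d}$.

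The main obstacle, namely ensuring SOSC at every SONC point generically, was already settled in Theorem \ref{thm:gen}, so no further algebraic work is needed here. The remaining step is simply to invoke Nie's theorem to conclude that Lasserre's hierarchy for \reff{1.1} has finite convergence for $f$ generic in $\mR[x]_{=d}$.
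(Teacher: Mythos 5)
Your proposal is correct and follows essentially the same route as the paper: both verify that LICQ and strict complementarity are automatic for the single equality constraint, use Proposition \ref{lasopt} and Theorem \ref{thm:gen} to get the SOSC at every local minimizer for generic $f$, and then invoke Nie's finite-convergence theorem. Your version merely spells out the LICQ verification ($\nabla g(x)=2x\neq 0$ on the sphere) a bit more explicitly than the paper does.
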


\begin{proof}
	Note that every local minimizer of \reff{1.1} is an SONC point. By Theorem \ref{thm:gen}, for generic $f$, every local minimizer of \reff{1.1} is an SOSC point. We can easily verify that the  linear independence constraint qualification, strict complementarity  conditions hold at every local minimizer since there is no inequality constraint. Thus   the Lasserre’s hierarchy of \reff{1.1} has finite convergence for generic $f$, by Theorem 1.1, \cite{nieopcd}.
\end{proof}

We would like to remark that Theorem \ref{thm:gen} has a simple principle when $d=2$.

\begin{lemma}
	Suppose $f=\frac{1}{2}x^TAx$ for a symmetric matrix $A \in \mR^{n\times n}$, and the eigenvalues of $A$   are ordered by $\lambda_{1} \leq \lambda_{2}\leq \ldots \leq  \lambda_{n}$. Then  every SONC point of \reff{1.1}  satisfies the SOSC if and only if the least eigenvalue  $\lambda_1$ is simple. 
\end{lemma}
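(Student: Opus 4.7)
The plan is to make everything explicit using the structure $\nabla f(x)=Ax$ and $\nabla^2 f(x)=A$, and then simply read off the answer from the spectral decomposition of $A$.

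First I would translate the FONC and SONC into spectral data. The FONC $Ax^*-\lambda^* x^*=0$ with $\|x^*\|=1$ says exactly that $x^*$ is a unit eigenvector of $A$ with eigenvalue $\lambda^*$. For the SONC, because $x^*$ is an eigenvector, its orthogonal complement $(x^*)^{\perp}$ is $A$-invariant, and the SONC condition $u^T Au\geq \lambda^*$ for all unit $u\in(x^*)^{\perp}$ is equivalent to saying that every eigenvalue of $A$ restricted to $(x^*)^{\perp}$ is $\geq\lambda^*$. If $\lambda^*>\lambda_1$, one can pick a unit eigenvector $v$ for $\lambda_1$; by orthogonality of eigenspaces corresponding to distinct eigenvalues, $v\in (x^*)^{\perp}$, and then $v^TAv-\lambda^*=\lambda_1-\lambda^*<0$, contradicting the SONC. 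Hence the SONC points of \reff{1.1} are exactly the unit eigenvectors of $A$ associated with the least eigenvalue $\lambda_1$, and the multiplier must be $\lambda^*=\lambda_1$.

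Next I would examine when the strict inequality SOSC holds at such a point. Take an SONC point $x^*$ with $\lambda^*=\lambda_1$. If $\lambda_1$ is simple, then the spectrum of $A$ on $(x^*)^{\perp}$ consists of $\lambda_2,\ldots,\lambda_n$, all strictly greater than $\lambda_1$, so
\[
u^T A u-\lambda_1\geq \lambda_2-\lambda_1>0\quad\text{for every unit }u\in (x^*)^{\perp},
\]
which is the SOSC. Conversely, if $\lambda_1$ has multiplicity at least two, the $\lambda_1$-eigenspace has dimension $\geq 2$, so we can pick a unit eigenvector $v$ for $\lambda_1$ orthogonal to $x^*$; then $v\in(x^*)^{\perp}$ and $v^TAv-\lambda_1=0$, so the SOSC fails.

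Combining the two directions yields the claimed equivalence: every SONC point satisfies the SOSC if and only if $\lambda_1$ is simple. There is essentially no technical obstacle here; the whole argument is a direct translation of the FONC/SONC/SOSC of Proposition \ref{lasopt} into eigenvalue language, and the only point to be a little careful about is justifying that eigenvectors for different eigenvalues of a symmetric matrix are orthogonal, which I would just cite as a standard fact.
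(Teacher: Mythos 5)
Your proof is correct and follows essentially the same route as the paper: translate FONC/SONC/SOSC into spectral data for $A$ and read off the answer from the eigendecomposition. The only difference is that you prove directly that every SONC point is a unit eigenvector for $\lambda_1$ (via orthogonality of eigenvectors for distinct eigenvalues), whereas the paper cites Corollary 2.4 of Lasserre for this fact; your version is self-contained but not a different argument.
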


\begin{proof}
	Note that each point  satisfying the FONC is an eigenvector of $A $ with associated eigenvalue $2f\left(x^{*}\right)$. Suppose $x^*$ is an SONC point, then $x^*$ must be the eigenvector associated with $\lambda_1$, by Corollary 2.4, \cite{Lashom}. If $\lambda_1$ is not simple,  there must exist a nonzero $v\in \mathbb{S}^{n-1}$ such that
	$$
	Av=\lambda_{1}v,~~v^Tx^*=0.
	$$
	Hence, we have 
	$$
	v^{T} \nabla^{2} f\left(x^{*}\right) v- \lambda_1=0,
	$$ 
	which implies the SOSC fails at $x^*$.
	On the another hand, suppose $\lambda_{1}$ is simple. Let $v_2,\dots,v_n$ be the unit orthogonal eigenvectors associated with eigenvalues $\lambda_2,\dots,\lambda_{n}$, and we have  
	$$
	(x^*)^{\perp}=\{u \in \mathbb{S}^{n-1}: u=\mu_{2}v_2+\cdots+\mu_{n}v_n, ~\mu_{2},\dots,\mu_{n} \in \mR\}.
	$$
	For any $u \in (x^*)^{\perp}$,  we have 
	$$
	u^{T} \nabla^{2} f\left(x^{*}\right) u- \lambda_1=\sum\limits_{i=2}^n\lambda_{i}\mu_i^2- \lambda_1>\lambda_1 \sum\limits_{i=2}^n\mu_i^2- \lambda_1=0.
	$$ 
	Hence, $x^*$ is an SOSC point.
\end{proof}

Note that for generic symmetric matrix $A$, every eigenvalue  is simple, which directly implies that for generic $f \in \mR[x]_{=2}$,  every SONC point of \reff{1.1} is an SOSC point.

\bmhead{Acknowledgments}

The author gratefully acknowledges Professor Jean B. Lasserre for fruitful discussions, and  thanks Professor Ya-xiang Yuan for his constant help and encouragement. The author would also like to thank the editors and the anonymous referees for their careful reading and providing valuable suggestions. The work was partially supported by National Natural Science Foundation
of China (No. 11688101,  12288201).



\end{document}